\newtheorem{theorem}{Theorem}[section]
\newtheorem{lemma}[theorem]{Lemma}
\newtheorem{proposition}[theorem]{Proposition}
\newtheorem*{gluing}{Gluing Lemma}
\newtheorem*{question}{Question}
\theoremstyle{definition}
\newtheorem{definition}[theorem]{Definition}
\newtheorem{example}[theorem]{Example}
\theoremstyle{remark}
\newtheorem{remark}[theorem]{Remark}
\numberwithin{equation}{section}
\def\ds{\rule{0pt}{1.5ex}}
\begin{document}

\title{A gluing construction for polynomial invariants}
\author{Jia Huang}
\address{School of Mathematics, University of Minnesota, Minneapolis, MN 55455, USA}
\email{huang338@umn.edu}
\thanks{The author is grateful to Victor Reiner for asking questions which
led to the research and for giving many constructive suggestions.
The author thanks Aaron Potechin for proving that the ring of
invariants of a seaweed subgroup is polynomial, and thanks the
anonymous referee for providing helpful suggestions.
}

\keywords{Polynomial gluing; Ring of invariants; Sparsity groups}

\date{\today}

\begin{abstract}
We give a {\it polynomial gluing construction} of two groups
$G_X\subseteq GL(\ell,\mathbb F)$ and $G_Y\subseteq GL(m,\mathbb F)$
which results in a group $G\subseteq GL(\ell+m,\mathbb F)$ whose
ring of invariants is isomorphic to the tensor product of the rings
of invariants of $G_X$ and $G_Y$. In particular, this result allows
us to obtain many groups with polynomial rings of invariants,
including all $p$-groups whose rings of invariants are polynomial
over $\mathbb F_p$, and the finite subgroups of $GL(n,\mathbb F)$
defined by sparsity patterns, which generalize many known examples.
\end{abstract}

\maketitle

\section{Introduction}\label{polygluing}

The main theme of this paper is the {\it polynomial gluing
construction} described below. Recall that for a finite dimensional
vector space $V$ over an arbitrary field $\mathbb F$, the symmetric
algebra $S(V)$ is isomorphic to a polynomial algebra, and any
subgroup $G$ of $GL(V)$ acts on $S(V)$.

\begin{definition}\label{gluingdef}
Suppose $V=X\oplus Y$ a direct sum of $\mathbb F$-subspaces $X,Y$,
 $G_X\subseteq GL(X)$, $G_Y\subseteq GL(Y)$ are two groups, and
$\Phi\subseteq{\rm Hom}_{\,\mathbb F}(Y,X)$ is a $(G_X,G_Y)$-subbimodule.
Then one can check
\begin{equation}\label{2form}
G=\left\{g=\begin{bmatrix}
g_{\ds X} &  \phi_g  \\ 0 & g_{\ds Y}
\end{bmatrix}:\
\begin{array}{l}
g_{\ds X}\in G_X,\\
g_{\ds Y}\in G_Y,\\
\phi_g\in \Phi
\end{array}\right\}
\end{equation}
is a subgroup of $GL(V)$ isomorphic to $(G_X\times G_Y)\ltimes\Phi$,
where $g$ acts by
$$g\begin{bmatrix} x \\ y \end{bmatrix}
=\begin{bmatrix}g_{\ds X}(x)+\phi_g(y) \\ g_{\ds Y}(y)\end{bmatrix},
\textrm{ for all }\begin{bmatrix} x \\ y \end{bmatrix}\in X\oplus
Y.$$ Call $G$ a {\it polynomial gluing} of $G_X$ and $G_Y$ through
$\Phi$ (or really through $\pi$) if there exists a $G_X\times
G_Y$-equivariant isomorphism $\pi:S(V)\rightarrow S(V)^{\Phi}$ of
$\mathbb F$-algebras, where $\Phi$ acts on $V$ via the embedding
$$\phi_g\mapsto
\begin{bmatrix} 1 &  \phi_g  \\ 0 & 1
\end{bmatrix}.$$
In particular, this implies $S(V)^\Phi$ is itself a polynomial
algebra.
\end{definition}

\begin{proposition}\label{gluing invariants}
If $G$ is a polynomial gluing of $G_X$ and $G_Y$ through $\pi$, then
$$S(V)^G=\pi\left(S(X)^{G_X}\otimes S(Y)^{G_Y}\right)\quad{\rm
inside\ } S(V)^\Phi.$$ In particular, since $\pi$ is an isomorphism,
one has a ring isomorphism
$$S(V)^G\cong S(X)^{G_X}\otimes S(Y)^{G_Y}$$
ignoring gradings.
\end{proposition}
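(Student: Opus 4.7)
The plan is to compute $S(V)^G$ in two stages using the semidirect product structure $G=(G_X\times G_Y)\ltimes\Phi$. First I would observe that $\Phi$ is normal in $G$, since conjugation of an upper unitriangular matrix by any block-diagonal element of $G_X\times G_Y$ yields another element of $\Phi$; this is a direct matrix calculation from \eqref{2form}. Consequently $G/\Phi\cong G_X\times G_Y$ acts on the ring $S(V)^{\Phi}$, and invariants can be taken in two steps:
$$S(V)^G = \bigl(S(V)^{\Phi}\bigr)^{G_X\times G_Y}.$$

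Next I would transport the outer invariants across $\pi$. Because $\pi\colon S(V)\to S(V)^{\Phi}$ is an isomorphism of $\mathbb F$-algebras and is $G_X\times G_Y$-equivariant by hypothesis, it carries $S(V)^{G_X\times G_Y}$ bijectively onto $\bigl(S(V)^{\Phi}\bigr)^{G_X\times G_Y}$. Combining with the previous step,
$$S(V)^G = \pi\bigl(S(V)^{G_X\times G_Y}\bigr).$$

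Finally I would identify $S(V)^{G_X\times G_Y}$ with $S(X)^{G_X}\otimes S(Y)^{G_Y}$. The direct sum decomposition $V=X\oplus Y$ induces a canonical $\mathbb F$-algebra isomorphism $S(V)\cong S(X)\otimes S(Y)$, and under the $G_X\times G_Y$-action inherited from $V$ — in which $G_X$ acts only on $X$ and trivially on $Y$, and $G_Y$ acts only on $Y$ and trivially on $X$ — this isomorphism is equivariant. Taking $G_X$-invariants in the first factor and $G_Y$-invariants in the second gives
$$S(V)^{G_X\times G_Y} = S(X)^{G_X}\otimes S(Y)^{G_Y},$$
and applying $\pi$ produces the claimed equality inside $S(V)^{\Phi}$. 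The final ring isomorphism statement then follows because $\pi$ is an $\mathbb F$-algebra isomorphism.

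The only step requiring real verification is the normality of $\Phi$ in $G$, since everything afterwards is a formal assembly of standard facts about invariants under a normal subgroup and invariants of a product group acting on a tensor product. If normality and the induced action of $G_X\times G_Y$ on $S(V)^{\Phi}$ are granted, the remainder of the proof is essentially a chase of definitions.
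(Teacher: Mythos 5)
Your proposal is correct and follows essentially the same route as the paper: normality of $\Phi$ with quotient $G/\Phi\cong G_X\times G_Y$, two-step computation of invariants, transport across the $G_X\times G_Y$-equivariant isomorphism $\pi$, and the identification $S(X\oplus Y)^{G_X\times G_Y}=S(X)^{G_X}\otimes S(Y)^{G_Y}$. No gaps to report.
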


\begin{proof}
Note that $\Phi$ is a normal subgroup of $G$ with quotient $G/\Phi\cong
G_X\times G_Y$. Then
\begin{eqnarray*}
S(V)^G&=&(S(V)^{\Phi})^{G_X\times G_Y}
=(\pi(S(V)))^{G_X\times G_Y}\\
&=&\pi\left(S(X \oplus Y)^{G_X\times G_Y}\right)\\
&=&\pi\left(S(X)^{G_X}\otimes S(Y)^{G_Y}\right)
\end{eqnarray*}
since $\pi$ is a $G_X\times G_Y$-equivariant isomorphism.
\end{proof}

\begin{example}\label{ex1}
Let $V$ be a vector space over $\mathbb F=\overline{\mathbb F}_q$
with basis $\{x,y\}$, let $X=\mathbb Fx$, $Y=\mathbb Fy$, so that
$V=X\oplus Y$, and define
\begin{eqnarray*}
G_X&=&\mathbb F_{q^a}^\times\subseteq GL(X),\\
G_Y&=&\mathbb F_{q^b}^\times\subseteq GL(Y),\\
\Phi&=&\left\{\phi:\phi(y)=\gamma x,\ \gamma\in\mathbb F_{q^{ab}}\right\}\\
&\cong&\left\{\begin{bmatrix} 1 & \gamma \\ 0 & 1 \end{bmatrix}:
\gamma\in\mathbb F_{q^{ab}}\right\}
\cong\mathbb F_{q^{ab}},\\
G&=&\left\{\begin{bmatrix}
\alpha & \gamma \\
0 & \beta
\end{bmatrix}:
\alpha\in\mathbb F_{q^a}^\times, \beta\in\mathbb F_{q^b}^\times,
\gamma\in\mathbb F_{q^{ab}}\right\}.
\end{eqnarray*}
 Then one can check that $G$
is a polynomial gluing of $G_X$ and $G_Y$ through $\Phi$, with
\begin{eqnarray*}
\pi\,:\, S(V)&\rightarrow& S(V)^\Phi,\\
x&\mapsto& x,\\
y&\mapsto& f,
\end{eqnarray*}
where $f=y^{q^{ab}}-x^{q^{ab}-1}y$, and that
\begin{eqnarray*}
S(V)^G&=&\mathbb F[x^{q^a-1},f^{q^b-1}]\\
&=&\pi\left(\mathbb F[x^{q^a-1},y^{q^b-1}]\right)\\
&=&\pi\left(S(X)^{G_X}\otimes S(Y)^{G_Y}\right).
\end{eqnarray*}
\end{example}

Proposition \ref{gluing invariants} shows that polynomial gluings
are relevant to the {\it Polynomial Algebra Problem} - determine all
finite groups with polynomial invariant rings - for which the
nonmodular case was answered by the Chevalley--Shephard--Todd
theorem and the irreducible case was solved by Kemper and Malle
\cite{irreducible}. In the remaining case, that is, $G$ is reducible
and modular, the polynomial gluing construction can provide
interesting examples. First note that the construction can be
iterated in the following way.

\begin{definition}
A group $G\subseteq GL(V)$ is an {\it iterated polynomial gluing}
of $G_{X_1},\ldots,G_{X_t}$ if recursively $G$ is a polynomial gluing
of $G_X$ and $G_Y$ as above, where $G_X$ is an iterated polynomial
gluing of $G_{X_1},\ldots,G_{X_s}$ and $G_Y$ is an iterated polynomial
gluing of $G_{X_{s+1}},\ldots,G_{X_t}$ for some integer $s$.
\end{definition}

In Section \ref{PAP} we identify a family of $p$-groups given by
Nakajima \cite{Nakajima2} as iterated polynomial gluings. In
particular, it implies the following.

\begin{proposition}\label{gluingof1}
A $p$-group in $GL(n,\mathbb F_p)$ has a polynomial ring of
invariants over $\mathbb F_p$ if and only if it is an iterated
polynomial gluing of $n$ copies of trivial groups $\{1_{\mathbb
F_p}\}$.
\end{proposition}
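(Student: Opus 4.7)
The plan is to prove the two directions separately. The forward (``if'') direction is a clean induction based on Proposition \ref{gluing invariants}, while the reverse (``only if'') direction hinges on Nakajima's classification \cite{Nakajima2} of $p$-groups in $GL(n,\mathbb{F}_p)$ with polynomial invariant rings over $\mathbb{F}_p$, combined with an explicit identification of Nakajima's groups as iterated polynomial gluings.

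For the forward direction, I would induct on $n$. The base case $n=1$ is immediate, since the invariant ring of $\{1\}$ acting on a one-dimensional space is $\mathbb{F}_p[x]$, which is polynomial. For the inductive step, suppose $G$ is an iterated polynomial gluing of $n$ trivial groups, realized as a polynomial gluing of $G_X$ and $G_Y$ along some $\Phi$, where $G_X$ and $G_Y$ are iterated polynomial gluings of $s$ and $n-s$ trivial groups respectively. By the inductive hypothesis, $S(X)^{G_X}$ and $S(Y)^{G_Y}$ are polynomial algebras on $s$ and $n-s$ generators. By Proposition \ref{gluing invariants}, $S(V)^G$ is isomorphic (as a ring, ignoring gradings) to $S(X)^{G_X}\otimes S(Y)^{G_Y}$, which is polynomial on $n$ generators.

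For the reverse direction, I would invoke Nakajima's theorem, which asserts that every $p$-subgroup of $GL(n,\mathbb{F}_p)$ with polynomial invariants over $\mathbb{F}_p$ is conjugate to a group of a specific triangular form relative to some basis and complete flag $0\subset V_1\subset\cdots\subset V_n=V$. The task then reduces to exhibiting each such Nakajima group as an iterated polynomial gluing of $n$ copies of $\{1_{\mathbb{F}_p}\}$; this identification is precisely the content deferred to Section \ref{PAP}. Concretely, one peels off one flag step at a time: take $X=V_1$ (a one-dimensional subspace on which $G$ acts trivially, giving $G_X=\{1\}$), let $G_Y$ be the induced Nakajima group on $V/V_1\cong Y$, and let $\Phi$ encode the off-diagonal block of $G$ corresponding to maps $Y\to X$. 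Iterating this decomposition realizes $G$ as an iterated polynomial gluing of $n$ trivial groups.

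The principal obstacle is verifying that each such peeling step is genuinely a polynomial gluing in the sense of Definition \ref{gluingdef}, that is, producing the required $G_X\times G_Y$-equivariant $\mathbb{F}_p$-algebra isomorphism $\pi:S(V)\to S(V)^\Phi$. Following the pattern of Example \ref{ex1}, where $\pi$ sends $y$ to the orbit product $f=y^{q^{ab}}-x^{q^{ab}-1}y$, the natural choice of $\pi$ sends each new coordinate to an appropriate orbit product (or norm) under $\Phi$. Nakajima's explicit description of the allowed $\Phi$ is what ensures these orbit products are polynomial generators of $S(V)^\Phi$ and that $\pi$ intertwines the $G_X\times G_Y$-action; this calculation, carried out in Section \ref{PAP}, is the substantive part of the proof.
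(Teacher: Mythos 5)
Your forward direction (iterated gluing $\Rightarrow$ polynomial invariants, by induction via Proposition \ref{gluing invariants}) is fine and is exactly what the paper uses. The gap is in the reverse direction, and it is not just a deferred computation: the concrete peeling you propose goes in the wrong direction and fails in general. Definition \ref{gluingdef} requires $G$ to be the \emph{entire} set (\ref{2form}), i.e.\ $G\cong(G_X\times G_Y)\ltimes\Phi$ with the blocks $g_X\in G_X$, $g_Y\in G_Y$, $\phi\in\Phi$ varying independently. If you take $X=V_1$, $G_X=\{1\}$, $G_Y$ the induced group on $V/V_1\cong Y$, and $\Phi$ the set of off-diagonal blocks, this product structure need not hold. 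For example, let $G\subseteq GL(3,\mathbb F_p)$ be the order-$p$ group generated by $g$ with $gx_1=x_1$, $gx_2=x_2$, $gx_3=x_3+x_1+x_2$: it satisfies Nakajima's hypotheses (orbit sizes $1,1,p$ and the flag condition), but with $X=\mathbb F x_1$ and the evident complement $Y=\mathbb F x_2\oplus\mathbb F x_3$ both the induced group $G_Y$ and the set of off-diagonal blocks have order $p$, so the set (\ref{2form}) has $p^2>p=|G|$ elements; e.g.\ the map fixing $x_2$ and sending $x_3\mapsto x_3+x_1$ lies in (\ref{2form}) but not in $G$. Whether a cleverly chosen complement rescues this is precisely what would need proof, and you give no argument; moreover, even when the decomposition does hold, the resulting $\Phi$ is typically a proper subspace of ${\rm Hom}(Y,X)$ vanishing on part of $Y$, so it is not of the form ${\rm Hom}_{\mathbb F_p}(Y',X')$ and the construction of the equivariant $\pi$ is not the ``orbit product'' recipe of Example \ref{ex1} applied verbatim. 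Appealing to ``the content of Section \ref{PAP}'' for all of this is circular, since that content is exactly what the statement asks you to prove.

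The paper peels from the other end of the flag, and that choice is what makes everything work. It sets $Y=\mathbb F x_n$, $G_Y=\{1\}$, $X=\mathbb F x_1\oplus\cdots\oplus\mathbb F x_{n-1}$, and $\phi_g(x_n)=gx_n-x_n$. The key step, absent from your sketch, is where the Nakajima hypothesis $\prod_i|Gx_i|=|G|$ actually enters: the counting argument showing $G=H$ with $H=\{h\in GL(n,\mathbb F):hx_j\in Gx_j\ \text{for all }j\}$. This identity simultaneously yields (i) that $\Phi=\{\phi_g:g\in G\}$ is closed under addition, hence equals ${\rm Hom}_{\mathbb F_p}(\mathbb F_p x_n,X')$ with $X'=\{\phi_g(x_n)\}$, so the Gluing Lemma supplies the required $G_X\times G_Y$-equivariant isomorphism $\pi$, and (ii) that the restriction $G_X$ to $X$ again satisfies Nakajima's hypotheses in dimension $n-1$, so the induction closes. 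Without an analogue of this counting step, your bottom-up peeling neither produces a group of the form (\ref{2form}) nor identifies $\Phi$ in a form to which the Gluing Lemma applies.
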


\noindent However, there exist groups with polynomial ring of invariants that
can not be constructed by polynomial gluing; see Example \ref{Sn}.

Another example (which originally motivated the polynomial gluing
construction for us) is the following.

\begin{definition}\label{supdef}
Let $V$ be the defining representation of $GL(n,\mathbb F)$ with
basis $\{x_1,\ldots,x_n\}$. Call a map
$$\sigma:\{1,\ldots,n\}\times\{1,\ldots,n\}\rightarrow 2^{\mathbb
F}\setminus\{\emptyset\}$$ a {\it sparsity pattern}. Given a
sparsity pattern $\sigma$, define the {\it sparsity group}
$GL^\sigma(n,\mathbb F)$ to be the subgroup of $GL(n,\mathbb F)$
generated by all transvections
$$\begin{aligned}
T_{ij}(a):\ & x_j\mapsto x_j+ax_i,\\
& x_k\mapsto x_k,\ {\rm for}\ k\ne j\\
{\rm with}\ \ \ & a\in\sigma(i,j),\ 1\leqslant i\ne j\leqslant n,
\end{aligned}$$
as well as all diagonal matrices
$$\begin{aligned}
D_i(a):\  & x_i\mapsto ax_i,\\
& x_k\mapsto x_k,\ {\rm for}\ k\ne i\\
\textrm{with }\ \  & a\in\sigma(i,i)\setminus\{0\},\ 1\leqslant i\leqslant n.
\end{aligned}$$
\end{definition}

For instance, the group discussed in Example \ref{ex1} is
$GL^\sigma(2,\mathbb F_{q^{ab}})$ where $\sigma$ is given by
$$\begin{matrix}
\sigma(1,1)=\mathbb F_{q^a}. & \sigma(1,2)=\mathbb F_{q^{ab}},\\
\sigma(2,1)=0,                             & \sigma(2,2)=\mathbb F_{q^b}.
\end{matrix}$$
In Section \ref{supsigma} we investigate the structure of an
arbitrary finite sparsity group $GL^\sigma(n,\mathbb F)$ and
identify it as the following iterated polynomial gluing.

\begin{theorem}\label{supgluing}
If the sparsity group $G=GL^\sigma(n,\mathbb F)$ is finite, then it is
an iterated polynomial gluing of subgroups of $GL(m,\mathbb F)$ of
the following two types:
\begin{enumerate}
\item[(i)] those with polynomial rings of invariants for $m=1,2$,
and

\item[(ii)] those between $SL(m,\mathbb F_q)$ and $GL(m,\mathbb
F_q)$ for various integers $m\in\{3,4,\ldots,n\}$ and finite fields
$\mathbb F_q\subseteq \mathbb F$.
\end{enumerate}

\noindent Consequently the ring of invariants of $G$ is polynomial.
\end{theorem}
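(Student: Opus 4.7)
The plan is to induct on the ambient dimension $n$. The base case $n=1$ is immediate: a one-dimensional finite sparsity group is a finite subgroup of $\mathbb{F}^{\times}$, which already has polynomial invariants and requires no gluing. For the inductive step, the first move is to exploit finiteness of $G$ together with the Steinberg commutator identity $[T_{ij}(a), T_{jk}(b)] = T_{ik}(ab)$ (for distinct $i,j,k$) to force each additive subgroup $\langle\sigma(i,j)\rangle \subseteq \mathbb{F}$ and each multiplicative subgroup $\langle\sigma(i,i)\setminus\{0\}\rangle \subseteq \mathbb{F}^{\times}$ to be finite, and to constrain the ``shape'' of $\sigma$: the transvection-reachability relation on $\{1,\ldots,n\}$ partitions the indices into strongly connected components $B_1,\ldots,B_k$, and a compatible linear ordering presents $G$ in block upper triangular form. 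Taking $X$ to be the span of the topmost block $B_1$ and $Y$ the span of the rest, $G$ fits exactly into the framework of Definition~\ref{gluingdef}, with $G_Y$ itself a finite sparsity group on the strictly smaller space $Y$, to which the inductive hypothesis applies.

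The second step is to identify the irreducible block $G_X \subseteq GL(m,\mathbb{F})$ where $m = |B_1|$. Because $B_1$ is strongly connected, $G_X$ acts irreducibly on $X$ and is generated by transvections together with the diagonal matrices coming from $\sigma(i,i)$. For $m=1,2$, a direct case analysis (carried out in Section~\ref{supsigma}) enumerates the possibilities and shows each has a polynomial ring of invariants, placing $G_X$ in type~(i); the rank-two case generalizes Example~\ref{ex1}. For $m \geq 3$, the classical Kantor--McLaughlin classification of finite irreducible transvection subgroups of $GL(m,\mathbb{F})$ forces $G_X$ to lie between $SL(m,\mathbb{F}_q)$ and $GL(m,\mathbb{F}_q)$ for some finite subfield $\mathbb{F}_q \subseteq \mathbb{F}$, which is exactly type~(ii). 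In either case $S(X)^{G_X}$ is polynomial --- directly for~(i), and by Dickson's theorem for~(ii).

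The third and most delicate step is to verify that the block-upper-triangular decomposition is an \emph{honest} polynomial gluing --- equivalently, to construct the $G_X \times G_Y$-equivariant algebra isomorphism $\pi\colon S(V) \to S(V)^{\Phi}$ required by Definition~\ref{gluingdef}. Modelled on Example~\ref{ex1}, I would set $\pi(x_i) = x_i$ for $i \in B_1$ and $\pi(y_j) = f_j$ for $j \notin B_1$, where $f_j = \prod_{\phi \in \Phi}\bigl(y_j - \phi(y_j)\bigr)$ is the $\Phi$-orbit polynomial: monic in $y_j$ of degree $|\Phi|$, with remaining coefficients in $\mathbb{F}[x_i : i \in B_1]$, the higher-rank analogue of the Dickson-type polynomial $y^{q^{ab}} - x^{q^{ab}-1}y$ of Example~\ref{ex1}. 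The task is to show that the $f_j$ together with the $x_i$ freely generate $S(V)^{\Phi}$ as a polynomial algebra, and that $\pi$ intertwines the full $G_X \times G_Y$-action.

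The main obstacle is precisely this last verification: $\Phi$ carries a nontrivial bimodule structure under $(G_X, G_Y)$, and the $\Phi$-orbit structure on each $y_j$ must be compatible with both actions simultaneously for $\pi$ to be equivariant. This is the modular-invariant-theoretic heart of the theorem, and it rests on a careful understanding of the translation action of the elementary abelian $p$-group $\Phi$ on $S(V)$. Once $\pi$ is produced, Proposition~\ref{gluing invariants} yields $S(V)^G \cong S(X)^{G_X} \otimes S(Y)^{G_Y}$, and combining the classification of $G_X$ above with the inductive hypothesis on $G_Y$ gives polynomiality of $S(V)^G$ and completes the proof.
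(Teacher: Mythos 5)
Your skeleton (block decomposition by transvection-reachability, induction, orbit-type polynomials, then Proposition \ref{gluing invariants}) matches the paper's, but the step you yourself flag as ``the main obstacle'' --- producing the $G_X\times G_Y$-equivariant isomorphism $\pi\colon S(V)\to S(V)^{\Phi}$ --- is precisely what you do not prove, and your setup makes it harder than it needs to be. The paper first proves a structure theorem (Theorem \ref{structure}): using the identities $[T_{ij}(a),T_{jk}(b)]=T_{ik}(ab)$, conjugation by the $D_i(a)$, finiteness, and a rescaling of basis, it shows the off-diagonal blocks fill out \emph{full} matrix spaces $M(n_r\times n_s,k_{rs})$ over additive groups satisfying $k_{rs}k_{s\ell}\subseteq k_{r\ell}$, with each $k_{rr}$ a finite field. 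It then peels off the \emph{last} block $Y=X_t$ (not the first, as you do), so that $Y$ is rational over the single field $\mathbb F_q=k_{tt}$, $G_Y$ is defined over $k_{tt}$, and $\Phi$ is the full Hom space ${\rm Hom}_{k_{tt}}(Y',X')$ with $X'=\oplus_{r<t}\oplus_{i\in J_r}k_{rt}x_i$; the Gluing Lemma then gives $\pi(y_i)=P(y_i)$ with $P(t)=\prod_{x\in X'}(t-x)$, whose $\mathbb F_q$-linearity yields equivariance, and the degree count $|X'|^m=|\Phi|$ plus Lemma \ref{criterion} yields surjectivity onto $S(V)^\Phi$. In your version two things go wrong: (i) your $f_j=\prod_{\phi\in\Phi}\bigl(y_j-\phi(y_j)\bigr)$ has degree $|\Phi|$ and equals the true orbit product $\prod_{v\in\{\phi(y_j)\}}(y_j-v)$ raised to the power $|\Phi|/|\{\phi(y_j)\}|$, so for $\dim Y\geqslant 2$ the $x_i,f_j$ generate a \emph{proper} subalgebra of $S(V)^\Phi$ and $\pi$ cannot be the required isomorphism; (ii) peeling off the first block makes $\Phi$ a direct sum of matrix spaces over possibly different additive groups $k_{1s}$, not a Hom space over one finite field, and $G_Y$ mixes the blocks $2,\ldots,t$, so equivariance of an orbit-product map is genuinely unclear --- this is exactly the difficulty the paper's choice of decomposition and its fullness statement (d) are designed to remove.

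The identification of the diagonal blocks also has gaps. For $m\geqslant 3$ you appeal to the Kantor--McLaughlin classification of finite irreducible transvection groups, but that list also contains symplectic, unitary and (in characteristic $2$) orthogonal groups, so it does not by itself force $SL(m,\mathbb F_q)\leqslant G_X\leqslant GL(m,\mathbb F_q)$; the paper avoids the classification entirely by showing, via the same commutator/rescaling argument, that all the root sets $S(i,j)$ ($i\neq j$ in a block of size $\geqslant 3$) coincide with one finite field $k_{rr}$, whence $G_{X_r}$ contains every elementary transvection over $k_{rr}$ and hence $SL(n_r,k_{rr})$ by Lemma \ref{generating GL}. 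For $m=2$ your ``direct case analysis'' is not carried out and is not routine: the paper's closing Remark exhibits a rank-two block of order $10$ in $GL(2,\mathbb F_4)$ lying between $SL$ and $GL$ over no subfield, and the paper instead invokes Nakajima's theorem (Theorem \ref{degree2}) that finite two-dimensional pseudoreflection groups have polynomial invariants. So the three load-bearing ingredients --- fullness of $\Phi$ over a single finite field, the additive-polynomial construction of $\pi$ with its degree count, and the rank-$2$ and rank-$\geqslant 3$ block identifications --- are missing or incorrect as stated in your proposal.
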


This theorem covers many known results asserting that various groups
have polynomial invariants, including the following.

\begin{itemize}
\item The general linear group $GL(n,\mathbb F_q)$ and the special
linear group $SL(n,\mathbb F_q)$ (Dickson \cite{Dickson}),

\item The group of all upper triangular matrices with $1$'s on diagonal
(M.-J. Bertin \cite{Bertin}),

\item Parabolic subgroups $G_{n_1,\ldots,n_{\ell}}$
of all block upper triangular matrices with diagonal blocks of size
$n_1,\ldots,n_{\ell}$ (Hewett \cite{parabolic}, Mui \cite{Mui}),

\item Seaweed groups $G_{\alpha,\beta}=G_{\alpha}\cap
(G_{\beta})^t$ associated with seaweed Lie algebras
\cite{seaweed1,seaweed2}, where $\alpha=(\alpha_1,\ldots,\alpha_r)$,
$\beta=(\beta_1,\ldots,\beta_s)$ are integer compositions of $n$
(Potechin\footnote{REU 2008 (mentored by V. Reiner and D. Stanton),
School of Mathematics, University of Minnesota.}),

\item The groups $G(r)$ of all matrices that agree
with $I_n$ in its first $r$ rows (Steinberg \cite{Steinberg2}),

\item The groups $E_{\mathbb F_q}(r)$ generated by
$\{T_{in}(\alpha) :\alpha\in \mathbb F_q,i=1,\ldots,r\}$ as well as
the transpose groups $E_{\mathbb F_q}(r)^t$ for $r=1,\ldots,n-1$,
(Smith \cite[Proposition 8.2.5, 8.2.6]{Smith}).
\end{itemize}

\begin{remark}
There is another natural way to get sparsity groups in
$GL(n,\mathbb F)$. Given a sparsity pattern $\sigma$, let
$$GL_{\sigma}(n,\mathbb F)=\left\{[a_{ij}]_{i,j=1}^n\in GL(n,\mathbb F):
a_{ij}\in\sigma(i,j),i,j=1,\ldots,n\right\}.$$ One can show that
$GL_\sigma(n,\mathbb F)$ is a group if and only if it equals
$GL^{\sigma}(n,\mathbb F)$ and hence it suffices to consider only
$GL^\sigma(n,\mathbb F)$; for this reason we omit the discussion of
$GL_\sigma(n,\mathbb F)$.
\end{remark}

\section{Gluing Lemma}\label{Gluing lemma}

Using a common technique for showing a ring of invariants to be
polynomial we obtain a special kind of polynomial gluing
construction. This turns out to be sufficient for our later use
in proving Proposition \ref{gluingof1} and Theorem \ref{supgluing}.

\begin{lemma}{\cite[Proposition 16]{AlgIndep}}\label{criterion} 
Let $V$ be a vector space over an arbitrary field $\mathbb F$ with
basis $\{x_1,\ldots,x_n\}$, and $G\subseteq GL(V)$ a finite group.
Then $\mathbb F[x_1,\ldots,x_n]^G$ is a polynomial algebra if and
only if there are algebraically independent homogeneous invariants
$f_1,\ldots,f_n\in \mathbb F[x_1,\ldots,x_n]^G$ such that
$\deg(f_1)\cdots\deg(f_n)=|G|$.
\end{lemma}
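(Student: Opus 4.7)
The plan is to reduce the lemma to two independent pieces: a free-module/rank computation comparing $\mathrm{rank}_R S$ to $|G|$, and an integral-closure step converting an equality of fraction fields into an equality of rings. Throughout, write $S=\mathbb F[x_1,\ldots,x_n]$, and given homogeneous invariants $f_1,\ldots,f_n\in S^G$ of degrees $d_1,\ldots,d_n$, set $R=\mathbb F[f_1,\ldots,f_n]\subseteq S^G\subseteq S$.

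The central intermediate claim I would establish is: if $f_1,\ldots,f_n$ are algebraically independent, then $S$ is a free graded $R$-module of rank $d_1\cdots d_n$. Module-finiteness of $S$ over $R$ is immediate, since $\mathrm{Quot}(R)$ has transcendence degree $n$ over $\mathbb F$, forcing $S$ to be algebraic and hence integral over $R$. Freeness is the main technical obstacle; I would handle it by invoking that $S$ is Cohen--Macaulay (as a polynomial ring) and finite over the regular graded ring $R$, so the graded Auslander--Buchsbaum formula yields that $S$ is free over $R$. The rank then comes from Hilbert series: writing $\mathrm{Hilb}(S,t)=\mathrm{Hilb}(R,t)\cdot\sum_j t^{e_j}$ gives
\[
\sum_{j} t^{e_j}\;=\;\prod_{i=1}^n\frac{1-t^{d_i}}{1-t}\;=\;\prod_{i=1}^n(1+t+\cdots+t^{d_i-1}),
\]
which specializes at $t=1$ to $d_1\cdots d_n$.

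With this in hand, both directions fall out quickly. Since $G\subseteq GL(V)$ acts faithfully, classical Galois theory gives $[\mathrm{Quot}(S):\mathrm{Quot}(S^G)]=|G|$. For the ``only if'' direction, polynomiality of $S^G$ supplies the $f_i$, and taking $R=S^G$ one reads off $d_1\cdots d_n=\mathrm{rank}_R S=[\mathrm{Quot}(S):\mathrm{Quot}(R)]=|G|$. For the ``if'' direction, the hypothesis $d_1\cdots d_n=|G|$ combines with the rank computation to force $[\mathrm{Quot}(S):\mathrm{Quot}(R)]=|G|=[\mathrm{Quot}(S):\mathrm{Quot}(S^G)]$, whence $\mathrm{Quot}(R)=\mathrm{Quot}(S^G)$. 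Because $R$ is a polynomial ring it is integrally closed, and every element of $S^G$ is integral over $R$ (being in the finite $R$-module $S$); these two facts together force $S^G\subseteq R$, hence $S^G=R$.

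The technical heart is thus the freeness of $S$ over $R$, which rests on the Cohen--Macaulay property of polynomial rings and requires no nonmodularity assumption. This is what keeps the criterion applicable in the modular setting exploited throughout the rest of the paper; a Reynolds-operator or averaging-based shortcut, which would otherwise streamline the field-theoretic step, is unavailable there and must be circumvented as above.
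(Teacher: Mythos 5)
The paper itself offers no proof of this lemma (it is quoted from Kemper), so the only question is whether your argument is sound, and it has a genuine gap at what you call its technical heart. Your intermediate claim --- that algebraic independence of the homogeneous invariants $f_1,\ldots,f_n$ alone makes $S=\mathbb F[x_1,\ldots,x_n]$ a free module of rank $d_1\cdots d_n$ over $R=\mathbb F[f_1,\ldots,f_n]$ --- is false, and the justification offered for module-finiteness (``algebraic over $\mathrm{Quot}(R)$, hence integral over $R$'') is not valid: being algebraic over the fraction field does not give integrality over the subring. Concretely, $f_1=x$, $f_2=xy$ in $\mathbb F[x,y]$ (invariants of the trivial group) are algebraically independent of degrees $1,2$, yet $y$ is not integral over $\mathbb F[x,xy]$, $S$ is not a finite $R$-module, and $[\mathrm{Quot}(S):\mathrm{Quot}(R)]=1\neq 2=d_1d_2$; for a nontrivial group, $f_1=x+y$, $f_2=(x+y)xy$ for $\mathfrak S_2$ exhibits the same failure. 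The graded Auslander--Buchsbaum argument requires $S$ to be a finitely generated $R$-module in the first place, i.e.\ it requires $f_1,\ldots,f_n$ to form a homogeneous system of parameters, which algebraic independence does not provide. Consequently, in your ``if'' direction the chain $d_1\cdots d_n=\mathrm{rank}_R S=[\mathrm{Quot}(S):\mathrm{Quot}(R)]$ is unjustified, and the later step ``every element of $S^G$ is integral over $R$'' collapses with it.

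What is actually needed in the ``if'' direction is an argument that the equality $d_1\cdots d_n=|G|$ forces $f_1,\ldots,f_n$ to be a system of parameters. The standard route (essentially Kemper's) is a Bezout/Perron-type degree bound: for any algebraically independent homogeneous $f_1,\ldots,f_n$ one has $[\mathrm{Quot}(S):\mathbb F(f_1,\ldots,f_n)]\leqslant d_1\cdots d_n$, with equality if and only if $S$ is integral over $R$. Combining this with $|G|=[\mathrm{Quot}(S):\mathrm{Quot}(S^G)]\leqslant[\mathrm{Quot}(S):\mathbb F(f_1,\ldots,f_n)]$ and the hypothesis $d_1\cdots d_n=|G|$ forces equality throughout, so $S$ is finite over $R$ and $\mathrm{Quot}(S^G)=\mathrm{Quot}(R)$; at that point your integral-closure step does finish the proof, and freeness is not even needed in this direction. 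Your ``only if'' direction is correct in substance, but the finiteness of $S$ over $S^G$ should be justified by the orbit polynomials $\prod_{g\in G}(T-g\cdot s)$ rather than by the algebraic-hence-integral claim; with that repair, the Auslander--Buchsbaum plus Hilbert-series computation of the rank and the Galois-theoretic identity $[\mathrm{Quot}(S):\mathrm{Quot}(S^G)]=|G|$ go through as you describe.
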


\begin{gluing}
Let $\mathbb F_q\subseteq\mathbb F$ be a field extension, and
consider a finite dimensional $\mathbb F$-space decomposition
$V=X\oplus Y$ with $Y$ being $\mathbb F_q$-rational, i.e.
$Y=Y'\otimes_{\mathbb F_q}\mathbb F$ for some $\mathbb F_q$-space
$Y$. Let $X'\subseteq X$ be an $\mathbb F_q$-subspace stabilized by
some subgroup $G_X\subseteq GL_{\mathbb F}(X)$.

Then any subgroup $G_{Y'}\subseteq GL_{\mathbb F_q}(Y')$ gives rise
to a polynomial gluing of $G_X$ and
$$G_Y=G_{Y'}\otimes1_{\mathbb F}$$ inside $GL_{\mathbb F}(Y)$ through
$$\Phi={\rm Hom}_{\mathbb F_q}(Y',X')\otimes 1_{\mathbb F}$$ inside
${\rm Hom}_{\mathbb F}(Y,X)$.
\end{gluing}

\begin{proof}
One easily sees $G_X\circ\Phi\circ G_Y\subseteq\Phi$ from our
assumptions and thus (\ref{2form}) defines a group $G$. Consider the
polynomial $$P(t)=\prod_{x\in X'}(t-x)$$ which is well-known to be
an $\mathbb F_q$-linear function of $t$ in $S(X)^{G_X}[t]$ (see, for
example, Wilkerson \cite{Wilkerson}). Let $\{x_1,\ldots,x_\ell\}$ be
an $\mathbb F$-basis for $X$ and $\{y_1,\ldots,y_m\}$ an $\mathbb
F_q$-basis for $Y'$ (hence an $\mathbb F$-basis for $Y$). Define
$f_i=P(y_i)$ in $S(V)$ for $i=1,2,\ldots,m$. One can check
$x_1,\ldots,x_\ell,f_1,\ldots,f_m$ are algebraically independent
$\Phi$-invariants.
Also note that
$$\deg(x_1)\cdots\deg(x_\ell)\cdot\deg(f_1)\cdots\deg(f_m)=|X'|^m=|\Phi|.$$
Thus Lemma \ref{criterion} gives
$$\mathbb F_q[V]^{\Phi} =\mathbb F_q[x_1,\ldots,x_\ell,f_1,\ldots,f_m].$$
It follows that the map
$$\begin{array}{ccccc}
\pi:& S(V)&\rightarrow& S(V)^\Phi,\\
&x_i&\mapsto& x_i,& i=1,\ldots,\ell,\\
&y_i&\mapsto& f_i, & i=1,\ldots,m
\end{array}$$
is an isomorphism of $\mathbb F_q$-algebras, and one checks that it
is $G_X\times G_Y$-equivariant as follows: for every $(g_{\ds
X},g_{\ds Y})\in G_X\times G_Y$, one has
$$\pi(g_{\ds X}x)=g_{\ds X}x=g_{\ds X}\pi(x),\ \textrm{for all } x\in X,$$
$$\pi(g_{\ds Y}y)=P(g_{\ds Y}y)=g_{\ds Y}P(y)=g_{\ds Y}\pi(y),\textrm{ for all } y\in Y,$$
using the $\mathbb F_q$-linearity of $P(t)$.
\end{proof}

The above lemma is only a special case of the polynomial gluing
construction from Definition \ref{gluingdef}, as demonstrated by the following example.

\begin{example}[R.E. Stong]
Given a prime number $p$, let $\{1,\alpha,\beta\}$ be a basis for
$\mathbb F=\mathbb F_{p^3}$ over $\mathbb F_p$. The group $G$ generated by
$$\begin{bmatrix}
1&1 \\
 &1 \\
 & &1
 \end{bmatrix},
\begin{bmatrix}
1& &1\\
 &1  \\
 & &1
\end{bmatrix},
\begin{bmatrix}
1&\alpha&\beta\\
 & 1          \\
 &      & 1
\end{bmatrix}$$
naturally acts on $V=\mathbb F^3$ with basis $\{x_1,x_2,x_3\}$ and
has a polynomial ring of invariants \cite[\S6.3, Example 2]{NS}. Let
$X=\mathbb Fx_1\oplus\mathbb Fx_2$ and $Y=\mathbb Fx_3$. It is easy
to see that $G$ is a polynomial gluing of $G_X=\{1\}$ and $G_Y=\{I_2\}$,
since the isomorphism $\pi$ sending $\{x_1,x_2,x_3\}$ to a set of algebra
generators of $S(V)^G$ is trivially equivariant. But the Gluing Lemma
does not apply to it, as one can check $\Phi\ne{\rm Hom}_{\mathbb F_q}(Y',X')$
for all possible choices of $X'$, $Y'$, and $\mathbb F_q$.
\end{example}

\section{The Polynomial Algebra Problem}\label{PAP}

We first consider Nakajima's $p$-groups and prove Proposition
\ref{gluingof1}.

\begin{theorem}[Nakajima \cite{Nakajima2}]\label{pgroup}
Let $\rho:G\hookrightarrow GL(V)=GL(n,\mathbb F)$ be a
representation of a finite $p$-group $G$ over a field $\mathbb F$ of
characteristic $p>0$.

Suppose that there is a basis $x_1,\ldots,x_n$ for $V$ such that
$\prod_{i=1}^n|Gx_i|=|G|$ and $\oplus_{i=1}^j \mathbb Fx_i$ is an
$\mathbb FG$-submodule of $V$ for $j=1,\ldots,n$. Then $S(V)^G$ is a
polynomial algebra generated by the top Chern classes $\prod_{x\in
Gx_i}x$ of the orbits $Gx_i$ for $i=1,\ldots,n$.

Furthermore, the converse holds when $\mathbb F=\mathbb F_p$, that
is, if $G$ is a $p$-group inside $GL(n,\mathbb F_p)$ whose ring of
invariants is polynomial, then there exists a basis $x_1,\ldots,x_n$
as above.
\end{theorem}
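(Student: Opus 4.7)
The plan is to prove the two directions separately. For the \emph{forward direction}, I would exhibit explicit polynomial generators --- the top Chern classes of the orbits --- and invoke Lemma \ref{criterion}. Concretely, define $f_i := \prod_{x \in Gx_i} x \in S(V)$ for $i = 1, \ldots, n$; each $f_i$ is homogeneous of degree $|Gx_i|$ and is $G$-invariant, since $G$ permutes $Gx_i$. The filtration hypothesis says $V_j := \bigoplus_{i \le j}\mathbb F x_i$ is an $\mathbb F G$-submodule, so each $g \in G$ sends $x_i$ to $\lambda_{g,i} x_i + v_{g,i}$ with $v_{g,i} \in V_{i-1}$ and $\lambda_{g,i} \in \mathbb F^\times$. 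Expanding the product over the orbit of $x_i$, the coefficient of $x_i^{|Gx_i|}$ in $f_i$ is a nonzero scalar, while all remaining terms lie in $S(V_{i-1})[x_i]$ with strictly smaller powers of $x_i$. This upper-triangular structure forces $f_1, \ldots, f_n$ to be algebraically independent; combined with $\prod_i \deg(f_i) = \prod_i |Gx_i| = |G|$, Lemma \ref{criterion} yields $S(V)^G = \mathbb F[f_1, \ldots, f_n]$, with the $f_i$ being the advertised top Chern classes.

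For the \emph{converse over $\mathbb F_p$}, I would induct on $n = \dim V$; the base case $n = 1$ is trivial. Since $G$ is a $p$-group acting on a nonzero $\mathbb F_p$-space and every orbit has $p$-power size, the fixed subspace $V^G$ is nonzero, so I pick $x_1 \in V^G \setminus \{0\}$. One would then show that $x_1$ can be arranged as part of a homogeneous polynomial generating set of $S(V)^G$, so that $S(V)^G/(x_1)$ remains polynomial of rank $n-1$; identifying this quotient with $S(\bar V)^G$ for $\bar V := V/\mathbb F_p x_1$, induction produces $\bar x_2, \ldots, \bar x_n \in \bar V$ satisfying the required filtration and orbit-product condition. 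Lifting these to $x_2, \ldots, x_n \in V$ and prepending $x_1$ gives the desired basis of $V$.

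The \emph{main obstacle} is this inductive descent on the converse side: showing both that $S(\bar V)^G$ is polynomial and that the orbit sizes $|G\bar x_i|$ multiply to $|G|$. One needs to know that the degree-one component of $S(V)^G$ coincides with $V^G$, so that a chosen nonzero fixed vector can be made part of a polynomial generating set, and then one must establish the isomorphism $S(V)^G/(x_1) \cong S(\bar V)^G$ --- a delicate statement in the modular setting, since taking $G$-invariants does not commute with quotients in general. The restriction to $\mathbb F = \mathbb F_p$ is essential here: it makes $|\mathbb F_p^\times|$ coprime to $p$, forcing every $p$-element to act unipotently, which both supplies the starting fixed vector and keeps the orbit-size product under control along the filtration.
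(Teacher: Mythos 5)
The paper does not prove Theorem \ref{pgroup} at all: it is quoted from Nakajima \cite{Nakajima2} and used as a black box, so there is no internal proof to compare with, and your attempt has to stand on its own. Your \emph{forward} direction is essentially the standard (and correct) argument: since every element of a $p$-group has $p$-power order, its diagonal eigenvalues in the triangular form are $p$-power roots of unity, hence equal to $1$ in characteristic $p$, so each orbit element of $x_i$ is $x_i$ plus something in $V_{i-1}$; thus $f_i=\prod_{x\in Gx_i}x$ is a $G$-invariant, monic in $x_i$ of degree $|Gx_i|$ with all other terms of lower $x_i$-degree, the triangular shape gives algebraic independence, and $\prod_i\deg f_i=|G|$ lets you invoke Lemma \ref{criterion}. (Minor point: as stated, Lemma \ref{criterion} only asserts polynomiality; to get that these particular $f_i$ generate, you should add the observation that they form a system over which $S(V)$ is finite, so a fraction-field degree count plus integral closure of $\mathbb F[f_1,\ldots,f_n]$ yields $S(V)^G=\mathbb F[f_1,\ldots,f_n]$. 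This is routine.)

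The \emph{converse} is the hard content of Nakajima's theorem, and your proposal does not prove it; the steps you label as the "main obstacle" are precisely the theorem's substance. Concretely: (i) the identification $S(V)^G/(x_1)\cong S(\bar V)^G$ requires surjectivity of the restriction map $S(V)^G\to S(\bar V)^G$, which is exactly the kind of statement that fails for modular groups in general, and nothing in your sketch establishes it here, nor that $S(\bar V)^G$ is again polynomial; (ii) even granting induction on $\bar V$, you must show that lifts $x_2,\ldots,x_n$ of a good basis of $\bar V$ still satisfy $\prod_i|Gx_i|=|G|$ in $V$, and orbit sizes are not controlled under lifting by anything you say; (iii) your explanation of why $\mathbb F=\mathbb F_p$ is essential is wrong: unipotence of $p$-elements holds over \emph{every} field of characteristic $p$, and the fixed vector $x_1$ exists over any such field, so your argument as described would prove the converse over all fields of characteristic $p$ — which is false, as shown by Stong's example in Section \ref{Gluing lemma} of this paper (a $p$-group over $\mathbb F_{p^3}$ with polynomial invariants admitting no basis of the required type). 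Any correct proof of the converse must use the prime field in a way that breaks for that example; yours does not, so the converse direction is a genuine gap rather than a compressible sketch.
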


\begin{proposition}\label{gluing1}
Every $p$-group $G$ as in Theorem \ref{pgroup} is an iterated
polynomial gluing of $n$ copies of the trivial group $\{1_\mathbb F\}$.
\end{proposition}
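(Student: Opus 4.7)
The plan is to induct on $n$. The base case $n=1$ is immediate: a $p$-group in $GL(1,\mathbb{F})=\mathbb{F}^{\times}$ must be trivial since $\mathbb{F}^{\times}$ has no $p$-torsion in characteristic $p$, so $\{1_{\mathbb{F}}\}$ is tautologically its own iterated polynomial gluing of one copy.

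For the inductive step I would peel off the top basis vector. Set $X=\bigoplus_{i<n}\mathbb{F}x_i$ and $Y=\mathbb{F}x_n$; the flag hypothesis makes $X$ a $G$-submodule, and the absence of $p$-torsion in $GL(Y)=\mathbb{F}^{\times}$ puts every $g\in G$ into block form $\begin{bmatrix}g_X & \phi_g\\0 & 1\end{bmatrix}$. Let $G_X$ be the image of $\rho\colon G\to GL(X)$ and $\Phi=\ker\rho$, identified inside ${\rm Hom}_{\mathbb{F}}(Y,X)$ via $\phi\leftrightarrow\begin{bmatrix}1 & \phi\\0 & 1\end{bmatrix}$. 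Then $\Phi$ is elementary abelian of exponent $p$, hence an $\mathbb{F}_p$-subspace, and it is $G_X$-stable under left composition (by normality in $G$). The main technical step is a stabilizer-chain calculation: writing $H_j={\rm Stab}_G(x_1,\ldots,x_j)$, each factor of $|G|=\prod_{j=1}^{n}[H_{j-1}:H_j]$ is bounded by $|Gx_j|$ since the $H_{j-1}$-orbit of $x_j$ lies inside $Gx_j$, and Nakajima's hypothesis $\prod|Gx_j|=|G|$ forces equality term by term. At $j=n$ this yields $|\Phi|=|H_{n-1}|=|Gx_n|$, and because $\phi\mapsto\phi(x_n)$ injects $\Phi$ into $Gx_n-x_n=\{\phi_g(x_n):g\in G\}$, matching cardinalities force every $g\in G$ to already satisfy $\phi_g\in\Phi$. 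Hence the extension $1\to\Phi\to G\to G_X\to 1$ splits, $G=(G_X\times\{1\})\ltimes\Phi$ in the exact form (\ref{2form}), and the same orbit-stabilizer argument applied inside $G_X$ gives $|G_X|=\prod_{i<n}|G_X x_i|$. So $G_X$ inherits Nakajima's hypothesis on $X$ with the flag $\bigoplus_{i\le j}\mathbb{F}x_i$, and the inductive hypothesis realizes $G_X$ as an iterated polynomial gluing of $n-1$ copies of $\{1_{\mathbb{F}}\}$.

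To finish I would invoke the Gluing Lemma with $\mathbb{F}_q=\mathbb{F}_p$, $Y'=\mathbb{F}_p x_n$, $G_{Y'}=\{1\}$, and $X'=\{\phi(x_n):\phi\in\Phi\}\subseteq X$, which is a $G_X$-stable $\mathbb{F}_p$-subspace. Then ${\rm Hom}_{\mathbb{F}_p}(Y',X')\otimes 1_{\mathbb{F}}$ matches $\Phi$ inside ${\rm Hom}_{\mathbb{F}}(Y,X)$, and the lemma produces the required $G_X\times\{1\}$-equivariant algebra isomorphism $\pi\colon S(V)\to S(V)^{\Phi}$, exhibiting $G$ as a polynomial gluing of $G_X$ and $\{1_{\mathbb{F}}\}$ through $\Phi$. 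Combined with the inductive decomposition of $G_X$, this displays $G$ as an iterated polynomial gluing of $n$ copies of $\{1_{\mathbb{F}}\}$. The main obstacle is the stabilizer-chain bookkeeping above: extracting from $\prod|Gx_j|=|G|$ both the splitting of $G$ into the exact semidirect-product shape demanded by Definition \ref{gluingdef} and the transfer of Nakajima's hypothesis to $G_X$; once these are pinned down, the Gluing Lemma does all the remaining polynomial-algebra work.
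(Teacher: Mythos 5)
Your argument is correct and is essentially the paper's proof: the same induction peeling off $Y=\mathbb{F}x_n$ from $X=\bigoplus_{i<n}\mathbb{F}x_i$, the same identification of $\Phi$ with ${\rm Hom}_{\mathbb{F}_p}(\mathbb{F}_p x_n,X')\otimes 1_{\mathbb{F}}$ using that $Y'$ is one-dimensional, and the same appeal to the Gluing Lemma with $\mathbb{F}_q=\mathbb{F}_p$ and $G_Y=\{1_{\mathbb{F}}\}$. The only (interchangeable) difference is how the hypothesis $\prod_i|Gx_i|=|G|$ is exploited: the paper shows $G$ equals the saturated group $H=\{h: hx_j\in Gx_j\ \text{for all } j\}$ by counting columns and reads off both that $\{\phi_g: g\in G\}$ is closed under addition and that $G_X$ inherits Nakajima's hypothesis, whereas you obtain the same two facts from the term-by-term equalities forced in the stabilizer-chain count $|G|=\prod_j[H_{j-1}:H_j]$.
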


\begin{proof}
Since $\oplus_{i=1}^j \mathbb Fx_i$ is an $\mathbb FG$-submodule of
$V$ for $j=1,\ldots,n$, all elements in $G$ are in upper triangular
matrix form under $x_1,\ldots,x_n$. The group $G_j$ of the $j$-th
diagonal entries of all elements in $G$ is canonically embedded in
$G$ and hence has a power of $p$ as its order. On the other hand,
since $G_j$ is a finite group, the field generated by $G_j$ must be
finite, and so the order of $G_j$ divides $p^k-1$ for some integer
$k$. This forces $G_j$ to be trivial. Therefore the diagonal entries
of elements in $G$ are all $1$'s.

Let $$H=\{h\in GL(n,\mathbb F):hx_j\in Gx_j,j=1,\ldots,n\}.$$ It is
clear that $G\subseteq H$ and by considering the number of choices
of the column vectors of $h\in H$ we get $|H|\leqslant
|Gx_1|\cdots|Gx_n|=|G|$. Therefore $G=H$ and it follows that the restriction
of $G$ to $X=\mathbb Fx_1\oplus\cdots\oplus\mathbb Fx_{n-1}$ is
$$G_X=\left\{g\in GL(n-1,\mathbb F): gx_j\in Gx_j,j=1,\ldots,n-1\right\}.$$
From this one checks that $G_X$ satisfies the properties for $G$ in Theorem
\ref{pgroup} with $n$ replaced by $n-1$, and thus is an iterated polynomial
gluing of $n-1$ copies of $\{1_{\mathbb F}\}$ by induction.

Let $Y=\mathbb Fx_n$ and for each $g$ in $G$, define $\phi_g\in{\rm
Hom}_{\mathbb F}(Y,X)$ by $\phi_g(x_n)=gx_n-x_n$. We wish to show
that $\Phi=\left\{\phi_g: g\in G\right\}$ is an $\mathbb
F_p$-subspace of ${\rm Hom}_{\mathbb F}(Y,X)$, i.e.
$\phi_g+\phi_h\in\Phi$ for all $g,h\in G$. Since $G=H$, there exist
$g',h'\in G$ with $g'x_j=h'x_j=x_j$ for $j=1,\ldots,n-1$, and
$g'x_n=gx_n$, $h'x_n=hx_n$. Thus
\begin{eqnarray*}
g'h'x_n&=&g'hx_n\\
&=&g'(x_n+\phi_h(x_n))\\
&=&gx_n+g'\phi_h(x_n)\\
&=&x_n+\phi_g(x_n)+\phi_h(x_n) \end{eqnarray*} and so
$\phi_g+\phi_h=\phi_{g'h'}\in\Phi$. Therefore $\Phi$ is an $\mathbb
F_p$-subspace of ${\rm Hom}_{\mathbb F}(Y,X)$ and must be equal to
${\rm Hom}_{\mathbb F_p}(Y',X')\otimes_{\mathbb F_p}1_{\mathbb F}$,
where $Y'=\mathbb F_p x_n$ and $X'=\{\phi_g(x_n):g\in G\}$, since $Y'$ is
of dimension one. It follows
from the Gluing Lemma that $G$ is a polynomial gluing of $G_X$ and
$G_Y=\{1_{\mathbb F}\}$ and we are done by induction on $n$.
\end{proof}

Combining Theorem \ref{pgroup} and Proposition \ref{gluing1} one
immediately has Proposition \ref{gluingof1}, which gives a simple
reason for a $p$-group to have a polynomial ring of invariants over
$\mathbb F_p$: the ring of invariants of the trivial group
$\{1_{\mathbb F_p}\}$ is polynomial. Of course, it is easy to prove
that these $p$-groups have polynomial rings of invariants directly
by using Lemma \ref{criterion}; we do not gain a truly simpler proof
by the method of polynomial gluing.

The next example shows that iterating the polynomial gluing
construction cannot produce all finite groups with polynomial rings
of invariants from the irreducible ones.

\begin{example}\label{Sn}
The symmetric group $\mathfrak S_n$ acts on $V$ by permuting the basis
$\{x_1,\ldots,x_n\}$, and the ring of invariants $S(V)^{\mathfrak S_n}$ is
always polynomial. One can show that the only nonzero proper
subrepresentations of $V$ are the line $U$ spanned by $x_1+\ldots+x_n$
and the hyperplane $W$ spanned by $x_1-x_2,x_2-x_3,\ldots,x_{n-1}-x_n$.
It is clear that $V=U\oplus W$ if and only if ${\rm char}(\mathbb F)=p\nmid n$.
We claim that if $p\mid n$ then $\mathfrak S_n$ can not be obtained by
a polynomial gluing construction unless $p=n=2$.

In fact, when $p=n=2$ taking $X=U=W=\mathbb F_2(x_1+x_2)$ and
$Y=\mathbb F_2(x_1+ax_2)$ for some $a\ne 1$ one has
$$\mathfrak S_2=\left\{1,(12)=\begin{bmatrix}1&1+a\\0&1\end{bmatrix}\right\}$$
is a polynomial gluing of two copies of $\{1\}$.

Assume for the sake of contradiction $p\mid n\geqslant 3$ and
$G=\mathfrak S_n$ is a polynomial gluing of $G_X$ and $G_Y$.
Then $X$ is either $U$ or $W$.

If $X=U$ then $Y$ is isomorphic to the quotient representation $V/U$
of $\mathfrak S_n$, which is easily seen to be faithful. Hence
$$|G_Y|=|G|=|G_X||G_Y||\Phi|$$ and thus $\Phi=\{0\}$, which implies
$Y$ is a subrepresentation of $V$. Then $Y$ is either $U$ or $W$,
which is absurd. However, $S(Y)^{G_Y}$ is polynomial: one has
$S(V/U)^{\mathfrak S_n}=\mathbb F[\bar e_2,\ldots,\bar e_n]$ where
$e_i$ is the $i$-th elementary symmetric polynomial in
$x_1,\ldots,x_n$ and $\bar e_i=e_i+U$.

If $X=W$ then the same argument as above
leads to a contradiction, since the subrepresentation $W$ is also faithful.
Neither $S(X)^{G_X}=S(W)^{\mathfrak S_n}$ nor $S(W/U)^{\mathfrak S_n}$
is polynomial for $n\geqslant5$ \cite[Section 5]{irreducible}.
\end{example}

This suggests the following question, for which we currently have no
answer.

\begin{question}
Let $G\subseteq GL(n,\mathbb F)$ be a group whose order is divisible
by ${\rm char}(\mathbb F)$ and whose ring of invariants is
polynomial. If $G$ is primitive and has a stable subspace $X$ of
dimension $m$ in $V$, then is there a complement space $Y$ such that
$V=X\oplus Y$ and $G$ is a polynomial gluing of $G_X\subseteq
GL(m,\mathbb F)$ and $G_Y\subseteq GL(n-m,\mathbb F)$?
\end{question}

\section{Sparsity groups $GL^\sigma(n,\mathbb F)$}\label{supsigma}

Let $V$ be the defining representation of $GL(n,\mathbb F)$ with basis
$\{x_1,\ldots,x_n\}$. Recall from Definition \ref{supdef}
that a sparsity pattern $\sigma$ gives rise to a group
$GL^\sigma(n,\mathbb F)$ sitting inside $GL(n,\mathbb F)$. First
note that one can order $x_1,\ldots,x_n$ to make all elements in
$GL^\sigma(n,\mathbb F)$ have a upper triangular block form.

\begin{definition}\label{preorder}
Define a preorder on $\{1,\ldots,n\}$ by saying $i\leqslant_\sigma
j$ if there exists a sequence $i=i_0,i_1,\ldots,i_n=j$ such that
either $i_k=i_{k+1}$ or $\sigma(i_k,i_{k+1})\ne\{0\}$ for
$k=0,1,\ldots,n-1$. This preorder induces an equivalence relation:
$i\sim_\sigma j$ if $i\leqslant_\sigma j$ and $j\leqslant_\sigma i$;
denote the equivalence classes by $J_1,\ldots,J_t$, and their sizes
by $n_1,\ldots,n_t$. By permuting $x_1,\ldots,x_n$ we assume without
loss of generality that the preorder ``$\leqslant_\sigma$'' is {\it
natural}, i.e. $i\leqslant_\sigma j$ only if $i<j$ or $i\sim_\sigma
j$. Define $X_k=\bigoplus_{j\in J_k}\mathbb Fx_j$.
\end{definition}


\begin{proposition}\label{irreducible}
Let $G=GL^\sigma(n,\mathbb F)$. Then $X_1\oplus\cdots\oplus X_k$ is
$G$-stable for $k=1,\ldots,t$; consequently all elements in $G$ have
the upper triangular block form
\begin{equation}\label{form}
M=\left[\begin{array}{ccccc}
M_{11} & M_{12} & M_{13} & \cdots & M_{1t}  \\
   0   & M_{22} & M_{23} & \cdots & M_{2t}  \\
       &        & \cdots &        &         \\
       &        & \cdots &        &         \\
   0   &    0   & \cdots &    0   & M_{tt}
\end{array}\right]\end{equation}
where each $M_{rs}$ is an $n_r\times n_s$ matrix. Moreover, $G$ is
irreducible if and only if $t=1$. \hfill$\Box$
\end{proposition}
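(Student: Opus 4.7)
My plan is to verify stability of each $X_1\oplus\cdots\oplus X_k$ one generator at a time, derive the block form as an immediate consequence, and then handle irreducibility by a chain-walking argument.

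For stability: the diagonal generators $D_i(a)$ scale $x_i$ and fix the rest, so every coordinate subspace is $D_i(a)$-stable. For a nontrivial transvection $T_{ij}(a)$ the scalar $a$ is a nonzero element of $\sigma(i,j)$, which forces $\sigma(i,j)\neq\{0\}$ and hence $i\leqslant_\sigma j$ via the length-one chain. Since $T_{ij}(a)$ fixes every basis vector except $x_j$, the only thing to check is that $i\in J_1\cup\cdots\cup J_k$ whenever $j\in J_1\cup\cdots\cup J_k$. Naturality gives $i\sim_\sigma j$ or $i<j$, and the chosen relabelling of indices makes each equivalence class a contiguous block $\{1,\ldots,n_1\}$, $\{n_1+1,\ldots,n_1+n_2\},\ldots$, so $i$ lies in a block indexed no later than $j$'s. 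Thus each $X_1\oplus\cdots\oplus X_k$ is $G$-stable, and the block form \eqref{form} falls out.

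For irreducibility, one direction is trivial: if $t\geqslant 2$ then $X_1$ is a proper nonzero $G$-stable subspace. Conversely, assume $t=1$ (with $n\geqslant 2$, since $n=1$ is vacuous) and let $W\subseteq V$ be any nonzero $G$-stable subspace. Fix $v=\sum c_j x_j\in W\setminus\{0\}$ with $c_{j_0}\neq 0$. Since $t=1$, trimming trivial steps from a chain ending at $j_0$ yields an index $i\neq j_0$ with $\sigma(i,j_0)\neq\{0\}$, and then $(T_{ij_0}(a)-\mathrm{id})\,v=ac_{j_0}x_i\in W$ for any nonzero $a\in\sigma(i,j_0)$, so $x_i\in W$. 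Now for any other index $i'$, the relation $i'\leqslant_\sigma i$ supplies a chain $i'=i_0,i_1,\ldots,i_m=i$ with $\sigma(i_{k-1},i_k)\neq\{0\}$; applying $T_{i_{k-1},i_k}(a_k)-\mathrm{id}$ to $x_{i_k}$ produces a nonzero multiple of $x_{i_{k-1}}$, so peeling the chain right-to-left shows $x_{i_{k-1}},\ldots,x_{i_0}=x_{i'}\in W$. Hence $W$ contains every basis vector and $W=V$.

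The only point requiring care is matching the direction of $\leqslant_\sigma$ to the direction in which transvections generate new basis vectors: $T_{ij}(a)$ moves $x_j$ rather than $x_i$, so from $x_j\in W$ the arrow $i\leqslant_\sigma j$ produces $x_i\in W$. Once this bookkeeping is straight the entire proof reduces to direct checks on the two families of generators listed in Definition \ref{supdef}.
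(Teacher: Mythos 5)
Your proof is correct and takes essentially the same route as the paper's: the stability and block-form claims are checked generator by generator (which the paper dismisses as straightforward), and for $t=1$ you extract $ac_{j_0}x_i\in W$ from a nonzero vector via $T_{ij_0}(a)-\mathrm{id}$ and then propagate along $\leqslant_\sigma$-chains, exactly the paper's "repeating" step, only spelled out in more detail.
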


\begin{proof}
The assertions are all straightforward except the irreducibility of
$G$ when $t=1$. To see it, assume $n\geqslant 2$ and let $W$ be a
nonzero $\mathbb F$-subspace of $V$ that is stable under $G$. Then
$W$ contains a nonzero element $v=\sum_{i=1}^n c_ix_i$ with
$c_i\ne0$ for some $i$. Since $t=1$ and $n\geqslant 2$, there exists
a $j\ne i$ such that $\sigma(j,i)$ contains a nonzero element $a$.
One checks that
$$T_{ji}(a)v-v=(ac_i)x_j\in W$$
which implies $x_j\in W$ since $ac_i\in\mathbb F^\times$.
Repeating this one has $x_1,\ldots,x_n\in W$, since $t=1$, and hence
$V=W$.
\end{proof}

Now we investigate the structure of a {\it finite} sparsity group
$G=GL^\sigma(n,\mathbb F)$ and use it to prove Theorem
\ref{supgluing}. If $\sigma(i,j)=\{0\}$ whenever $1\leqslant i\ne
j\leqslant n$, then $G$ is simply a direct sum of finite subgroups
of $\mathbb F^\times$, and Theorem \ref{supgluing} clearly holds in
this case.

\begin{theorem}\label{structure}
Assume $\sigma(i,j)\ne\{0\}$ for some $(i,j)$ with $1\leqslant i\ne
j\leqslant n$, and $G=GL^\sigma(n,\mathbb F)$ is finite. Then by
rescaling basis for $V$ one can find additive groups
$k_{rs}$ in $\mathbb F$, $1\leqslant r,s\leqslant t$, such that
\begin{enumerate}
\item[(a)] $k_{rr}$ is finite field for $r=1,\ldots,t$,

\item[(b)] $k_{rs}=\{0\}$ if $r>s$,

\item[(c)] $k_{rs}k_{s\ell}\subseteq k_{r\ell}$ if $r\leqslant s\leqslant\ell$,

\item[(d)] $G=\left\{[M_{rs}]_{r,s=1}^t:\begin{array}{cc}
M_{rr}\in G_{X_r}, &1\leqslant r\leqslant t,\\
M_{rs}\in M(n_r\times n_s, k_{rs}),&1\leqslant r<s\leqslant t
\end{array}\right\}$,
\end{enumerate}
where
\begin{enumerate}
\item[(e)] $SL(n_r,k_{rr})\leqslant G_{X_r}\leqslant GL(n_r,k_{rr})$
unless $n_r=2$,

\item[(f)] $M(n_r\times n_s, k_{rs})$ is the space of all
$n_r\times n_s$ matrices with entries in $ k_{rs}$.
\end{enumerate}
\end{theorem}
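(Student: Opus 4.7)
The plan is to analyze the diagonal blocks $G_{X_r}$ via a classification of irreducible groups generated by transvections, then deduce the off-diagonal block structure from the action of the diagonal blocks, and finally derive the closure condition~(c) from block matrix multiplication. By Proposition~\ref{irreducible}, each $G_{X_r}$ is a \emph{finite irreducible} subgroup of $GL(n_r,\mathbb F)$ generated by the transvections $T_{ij}(a)$ and diagonals $D_i(a)$ indexed by $J_r$. Using the commutator identity $[T_{ij}(a), T_{jk}(b)] = T_{ik}(ab)$, the conjugation identity $D_i(c) T_{ij}(a) D_i(c)^{-1} = T_{ij}(ac)$, and the fact that within $J_r$ the sparsity pattern is ``strongly connected,'' one sees that after rescaling $\{x_j : j\in J_r\}$ the nonzero parameter sets collapse to a common additive subgroup $k_{rr}\subseteq\mathbb F$ that is closed under multiplication; finiteness of $G$ then forces $k_{rr}$ to be a finite subfield of $\mathbb F$. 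For $n_r\geqslant 3$ one can invoke the classical classification of finite irreducible linear groups generated by transvections (McLaughlin--Wagner--Kantor) to conclude $SL(n_r,k_{rr})\leqslant G_{X_r}\leqslant GL(n_r,k_{rr})$, giving~(e). For $n_r=1$, $G_{X_r}$ is a finite subgroup of $\mathbb F^\times$ generating $k_{rr}$, so the inclusion holds trivially; the case $n_r=2$ is exempted from~(e) but still supplies a finite subfield $k_{rr}$ as above.

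For the off-diagonal blocks, fix $r<s$ and let $N_{rs}\subseteq G$ be the subgroup of elements differing from the identity only in the $(r,s)$-block. Because all blocks with indices strictly between $r$ and $s$ vanish, the group law on $N_{rs}$ reduces to addition of $(r,s)$-blocks, so $N_{rs}$ corresponds to an additive subgroup $K_{rs}\subseteq M(n_r\times n_s,\mathbb F)$. The actions of $G_{X_r}$ (left multiplication) and $G_{X_s}$ (right multiplication) by conjugation make $K_{rs}$ a $(k_{rr},k_{ss})$-bimodule, and the richness of $SL(n_r,k_{rr})$ and $SL(n_s,k_{ss})$ (with a direct check using the transvection generators handling the small-dimensional cases $n_r$ or $n_s\in\{1,2\}$) forces $K_{rs}=M(n_r\times n_s,k_{rs})$ for a single additive subgroup $k_{rs}\subseteq\mathbb F$, proving~(d) and~(f); condition~(b) is built into~(\ref{form}). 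For~(c), given $A\in K_{rs}$ and $B\in K_{s\ell}$, the product inside $G$ of the corresponding elements of $N_{rs}$ and $N_{s\ell}$ has $(r,\ell)$-block equal to $AB$, so $AB\in K_{r\ell}=M(n_r\times n_\ell, k_{r\ell})$, yielding $k_{rs}k_{s\ell}\subseteq k_{r\ell}$.

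The main obstacle is the first step: invoking the transvection classification cleanly enough to extract a single finite subfield $k_{rr}$ for each $r$, and simultaneously choosing a rescaling of the $x_j$ ($j\in J_r$) that is compatible with the bimodule normalization of every off-diagonal block $K_{rs}$ arising in the second step. The exceptional low-dimensional cases $n_r\in\{1,2\}$ also need separate bookkeeping, since the McLaughlin--Kantor--Wagner classification is not sharp there and the transitivity arguments that pin down $K_{rs}$ must be replaced by direct computation with the available transvection and diagonal generators.
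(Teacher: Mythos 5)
Your outline shares the paper's starting point (the commutator and conjugation identities, strong connectivity inside each class $J_r$, rescaling), but the two places where you lean on outside machinery or on quick assertions are exactly where the real work lies, and as written there are genuine gaps. For the diagonal blocks, the statement that the parameter sets ``collapse after rescaling'' to a single finite subfield $k_{rr}$ is the technical heart of the paper's proof: one sets $S(i,j)=\{a: T_{ij}(a)\in G\}$, derives $S(i,j)S(j,k)\subseteq S(i,k)$ as in (\ref{transitivity}), runs a cycle of inclusions $S(i,j)\cdot a_{jk}\subseteq S(i,k)$ around $J_r$, uses finiteness of $G$ to force equalities, and then rescales the basis; this cannot simply be asserted. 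Once that collapse is in hand, the McLaughlin--Wagner--Kantor classification buys you nothing: all root subgroups $T_{ij}(k_{rr})$, $i\ne j\in J_r$, lie in $G$, so $SL(n_r,k_{rr})\leqslant G_{X_r}$ follows from the elementary fact that $SL$ is generated by transvections (Lemma \ref{generating GL}). Conversely the classification alone cannot deliver (e): $G_{X_r}$ is generated by transvections \emph{and} the diagonal pseudo-reflections $D_i(a)$, the classification identifies the transvection subgroup only up to conjugacy (symplectic, unitary, orthogonal and symmetric groups also occur), and it says nothing about the upper bound $G_{X_r}\leqslant GL(n_r,k_{rr})$ in the chosen basis, which needs $\sigma(j,j)\subseteq k_{rr}$ (in the paper, from $S(j,j)\subseteq S(i,j)$ via (\ref{transitivity})). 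Your treatment of $n_r=2$ is also wrong as stated: with only two indices the transitivity trick is unavailable, $S(i,j)$ and $S(j,i)$ need not collapse to a field (see the paper's final Remark, where the group has order $10$), and finiteness of the field they generate is not automatic, since a finitely generated subfield of $\mathbb F$ need not be finite; the paper needs the separate argument that the $(j,i)$-entry of $(T_{ij}(1)T_{ji}(a))^d$ is a monic polynomial in $a$, so finiteness of $G$ forces $a$ to be algebraic over $\mathbb F_p$.

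For the off-diagonal blocks, the bimodule idea can be made to work, but not as you set it up. Conjugating an element of $N_{rs}$ by an arbitrary element of $G$ lifting a given element of $G_{X_r}$ does not preserve $N_{rs}$ (the result spreads into all blocks $(a,b)$ with $a\leqslant r$, $b\geqslant s$); you must conjugate by the block-diagonal generators $T_{i'i}(a)$, $D_i(a)$ with indices inside $J_r$ or $J_s$. More importantly, $K_{rs}=M(n_r\times n_s,k_{rs})$ does not yet prove (d). For the inclusion ``$\supseteq$'' you still need every block-diagonal element $D_r(M_{rr})$ with $M_{rr}\in G_{X_r}$ to lie in $G$; the paper proves this by observing that in a product of generators only those indexed inside $J_r$ can alter the $r$-th diagonal block. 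For ``$\subseteq$'' you must show that an arbitrary product of generators keeps its $(r,s)$-entries in $k_{rs}$, i.e.\ that the right-hand side of (d) is closed under multiplication, which uses (c) and $G_{X_r}\subseteq GL(n_r,k_{rr})$ -- yet you derive (c) only after (d), and that derivation is itself flawed: the product of $I+A\in N_{rs}$ and $I+B\in N_{s\ell}$ equals $I+A+B+AB$, which is not in $N_{r\ell}$, so $AB\in K_{r\ell}$ does not follow; you need the commutator $[I+A,I+B]=I+AB$, the block form of the identity behind (\ref{transitivity}) that the paper exploits at the entry level throughout.
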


\begin{proof}
By the assumption $G$ contains $T_{ij}(a)$ for some $a\ne 0$ and
thus contains $T_{ij}(ma)=T_{ij}(a)^m$ for all integers $m$. The
finiteness of $G$ forces $ma=0$ for some $m$, i.e. ${\rm
char}(\mathbb F)=p>0$.

Let
$$S(i,j)=\left\{\begin{array}{ll}
\{a: T_{ij}(a)\in G\}, & {\rm if}\ 1\leqslant i\ne j\leqslant n,\\
\{a:D_i(a)\in G\}, & {\rm if}\ 1\leqslant i=j\leqslant n.
\end{array}\right.$$
One has $S(i,j)=0$ by (\ref{form}) if $i\in J_r$, $j\in J_s$, $r>s$,
and
\begin{equation}\label{transitivity}
S(i,j)S(j,k)\subseteq S(i,k),\ \ {\rm if}\ i\ne k
\end{equation} by the following equalities:
$$T_{ik}(ab)=\left\{\begin{array}{ll}
\left[T_{ij}(a),T_{jk}(b)\right], & {\rm if}\ i\ne j\ne k,\\
D_i(a)T_{ik}(b)D_i(a^{-1}), &{\rm if}\ i=j\ne k,\\
D_k(b^{-1})T_{ik}(a)D_k(b), &{\rm if}\ i\ne j=k,
\end{array}\right.$$

Define a directed graph on the vertices $1,2,\ldots,n$ with edges
$i\rightarrow j$ if $i\ne j$ and $S(i,j)\ne\{0\}$. By
(\ref{transitivity}), one has the transitivity that $i\rightarrow
j\rightarrow k$ implies $i\rightarrow k$ if $i\ne k$. Hence each
$J_s$ induces a complete subgraph (i.e. a graph with edges in both
direction between any pair of distinct vertices); in particular there is a
directed cycle passing through all vertices in $J_s$ (once for each)
whenever $n_s\geqslant2$. Fixing an $i\in J_r$ with $r<s$ and
letting $(j,k)$ vary along the edges of this cycle one obtains from
(\ref{transitivity}) a corresponding cycle of set-inclusions of the
form
$$S(i,j)\cdot a_{jk}\subseteq S(i,k)$$
with nonzero transition scalars $a_{jk}\in S(j,k)$. The finiteness
of the group $G$ forces all the above inclusions to be equalities,
and one can easily rescale the basis for $X_s$ to make all but one
of the transition scalars $a_{jk}$ to be $1$ so that $S(i,j)=S(i,k)$
for all $j,k\in J_s$. A similar argument clearly works for $S(i,j)$
with $i$ varying in $J_r$ and $j$ fixed in $J_s$, $r<s$. In case
$r=s$ it still works as long as $i\ne j$ and $n_r>2$ so that one is
able to choose three distinct indices in (\ref{transitivity}).
Combining these together one sees that for any fixed pair $(r,s)$,
$S(i,j)$ takes the same set for all pairs $(i,j)$ with $i\in J_r$,
$j\in J_s$, $i\ne j$, unless $r=s$ and $n_r=2$.

Now we can find the additive groups $k_{rs}$ and verify
the assertions (a)--(f).

It is clear that we should take $k_{rs}=\{0\}$ if $r>s$, so that
assertion (b) is true.

If $r<s$ then let $k_{rs}=S(i,j)$ for any $i\in J_r$, $j\in J_s$,
which is clearly closed under addition.

If $r=s$ then we consider the following cases.

\vskip3pt\noindent{\sf Case 1}. If $n_r\geqslant 3$ then let
$k_{rr}=S(i,j)$ for any $i,j\in J_r$ with $i\ne j$. It is closed
under addition since $T_{ij}(a)T_{ij}(b)=T_{ij}(ab)$, and closed
under multiplication by (\ref{transitivity}) applied to distinct
$i,j,k\in J_r$. Thus $k_{rr}$ is a ring (not necessarily unital),
and the finiteness of $G$ forces $k_{rr}$ to be a finite field.
Hence (a) holds for this case.

Then one has $1\in S(i,j)$ and $S(j,j)\subseteq S(i,j)=k_{rr}$ by
(\ref{transitivity}). It follows from Lemma \ref{generating GL}
below that $G_{X_r}$ consists of all $n_r\times n_r$ matrices with
determinant in the multiplicative group generated by $\cup_{j\in
J_r}\sigma(j,j)\setminus\{0\}$. Hence (e) holds for this case.

\vskip3pt\noindent{\sf Case 2}. If $n_r=2$, i.e. $J_r=\{i,j\}$, then
let $k_{rr}$ be the field generated by $S(i,j)$ and $S(j,i)$. To
show it is finite, it suffices to show any nonzero element $a\in
S(i,j)\cup S(j,i)$ satisfies a polynomial equation. By the rescaling
of basis for $X_r$ that has been used before, one may assume $1\in
S(i,j)$ and $0\ne a\in S(j,i)$, without loss of generality. By
induction one sees that the $(j,i)$-entry of
$(T_{ij}(1)T_{ji}(a))^d$  is a monic polynomial $f_d(a)$ of degree
$d$, and the finiteness of $G$ forces $f_d(a)=a$ for a sufficiently
large $d$. Therefore (a) is true for this case.

\vskip3pt\noindent{\sf Case 3}. If $J_r=\{i\}$ then let $k_{rr}$ be
the field generated by $\sigma(i,i)$, which is finite since ${\rm
char}(\mathbb F)>0$ and $G$ is finite. It is obvious that the assertions
(a) and (e) are true in this case. \vskip3pt

It follows easily from (\ref{transitivity}) and the above definition of
$k_{rr}$ that (c) holds: $k_{rs}k_{s\ell}\subseteq k_{r\ell}$.

It remains to show (d) and (f). The inclusion ``$\subseteq$'' in (d)
is clear (consider the generators of $G$). Conversely, note that the
right hand side of (d) is generated by all $D_r(M_{rr})$ and
$T_{rs}(M_{rs})$ with $M_{rr}\in G_{X_r}$, $M_{rs}\in M(n_r\times
n_s, k_{rs})$, $1\leqslant r<s\leqslant t$, where $D_r(M_{rr})$ and
$T_{rs}(M_{rs})$ both have the block form (\ref{form}) such that the
$r$-th diagonal block of $D_r(M_{rr})$ is $M_{rr}$, the
$(r,s)$-block of $T_{rs}(M_{rs})$ is $M_{rs}$,  and all other blocks
are either zero (if off diagonal) or identity (if on diagonal). It
suffices to show all these $D_r(M_{rr})$ and $T_{rs}(M_{rs})$ belong
to $G$.

If $M_{rr}\in G_{X_r}$ then $G$ contains an element $M$ whose $r$-th
diagonal block is $M_{rr}$. Writing $M$ as a product of the
generators for $G$,  one sees that the only way to alter the $r$-th
diagonal block is to multiply by $T_{ij}(a)$ or $D_i(a)$ with $i,j$
both in $J_r$. Hence $D_r(M_{rr})$ is a product of these generators
and must belong to $G$.

If $M_{rs}\in M(n_r\times n_s,k_{rs})$ then
$T_{rs}(M_{rs})=[a_{ij}]_{i,j=1}^n$ is a product of $T_{ij}(a_{ij})$
as $i,j$ run through $J_r,J_s$, respectively. The definition of $
k_{rs}=S(i,j)$ implies that all $T_{ij}(a_{ij})$ belong to $G$ and
so does the product $T_{rs}(M_{rs})$.

We have verified all the assertions and the proof of the theorem is now complete.
\end{proof}

\begin{lemma}\label{generating GL}
If $K$ is a subgroup of $\mathbb F^\times$ generated by $S$, then
the group $G$ of all $n\times n$ matrices with entries in $\mathbb
F$ and determinant in $K$ is generated by all transvections
$T_{ij}(a)$ with $a\in\mathbb F$ and $1\leqslant i\ne j\leqslant n$
as well as all diagonal matrices $D_i(a)$ with $a\in S$ and
$1\leqslant i\leqslant n$.
\end{lemma}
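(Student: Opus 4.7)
The plan is to reduce the lemma to the classical fact that $SL_n(\mathbb F)$ is generated by the elementary transvections $T_{ij}(a)$ with $i\ne j$ and $a\in\mathbb F$. Let $H$ denote the subgroup of $GL(n,\mathbb F)$ generated by all such transvections together with the $D_i(a)$ for $a\in S$. First I would verify the easy inclusion $H\subseteq G$: every transvection has determinant $1\in K$, and every diagonal generator $D_i(a)$ with $a\in S$ has determinant $a\in K$, so any product of them lies in $G$.

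For the reverse inclusion $G\subseteq H$, take $M\in G$ with $\det(M)=k\in K$ and aim to exhibit $M$ as a product of the listed generators. The key observation is that since $K=\langle S\rangle$, the element $k$ can be written as $s_1^{\epsilon_1}\cdots s_r^{\epsilon_r}$ with $s_j\in S$ and $\epsilon_j\in\{\pm 1\}$. Setting $D:=D_1(s_1)^{\epsilon_1}\cdots D_1(s_r)^{\epsilon_r}=D_1(k)$ then produces a diagonal matrix in $H$ of determinant $k$. Consequently $D^{-1}M$ has determinant $1$, so by the classical generation theorem it lies in the subgroup of $H$ generated by transvections; hence $M=D\cdot(D^{-1}M)\in H$.

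The only genuine ``obstacle'' is the appeal to the classical generation of $SL_n(\mathbb F)$ by transvections, which we sidestep by citation: it follows from standard row-reduction arguments in any reference on linear groups. The edge case $n=1$ is handled separately --- there are no transvections, $G$ is just the subgroup $K$ of $\mathbb F^\times$, and the diagonal generators $D_1(s)=s$ for $s\in S$ generate $K$ by hypothesis, so the lemma holds trivially.
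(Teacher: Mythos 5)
Your proof is correct and follows essentially the same route as the paper's: both reduce to the classical fact that $SL(n,\mathbb F)$ is generated by the transvections $T_{ij}(a)$, and then observe that $D_1(k)$ lies in the generated subgroup for every $k\in K$, so that any $M\in G$ factors as $D_1(\det M)$ times an element of $SL(n,\mathbb F)$. You merely spell out the factorization and the $n=1$ case more explicitly than the paper does.
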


\begin{proof}
It is well known that $SL(n,\mathbb F)$ is generated by the
transvections $T_{ij}(a)$ with $a\in\mathbb F$ and $1\leqslant i\ne
j\leqslant n$ (see, for example, \cite{Litoff}), and hence contained
in the group $H$ generated by these transvections together with the
diagonal matrices $D_i(a)$ with $a\in S$ and $1\leqslant i\leqslant
n$. One also has $D_1(b)\in H$ for any $b\in K$. Therefore $H=G$.
\end{proof}

\begin{proof}[Proof of Theorem \ref{supgluing}]
It is clear from Theorem \ref{structure} that the Gluing Lemma in
Section \ref{Gluing lemma} applies to $G$ with $\mathbb F_q=k_{tt}$
and
\begin{eqnarray*}
X&=&X_1\oplus\cdots\oplus X_{t-1},\\
Y&=&X_t,\\
Y'&=&\bigoplus_{j\in J_t}k_{tt}x_j,\\
X'&=&\bigoplus_{r=1}^{t-1}\bigoplus_{i\in J_r}k_{rt}x_i.
\end{eqnarray*}
By induction on $t$ one shows that $G$ is an iterated polynomial
gluing of $G_{X_1},\ldots,G_{X_t}$. Then Theorem \ref{supgluing}
follows from Proposition \ref{gluing invariants}, Theorem
\ref{structure} (e), and the following result of Nakajima (see also
Kemper and Malle \cite[Proposition 7.1]{irreducible}).
\end{proof}

\begin{theorem}\cite[Theorem 5.1]{Nakajima1}\label{degree2}
If $G\subseteq GL(2,\mathbb F)$ is a finite group generated by
pseudoreflections then $S(V)^G$ is polynomial.
\end{theorem}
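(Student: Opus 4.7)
The plan is a case analysis based on the normal subgroup $T \trianglelefteq G$ generated by all transvections in $G$, i.e.\ the pseudoreflections of order equal to $p = \mathrm{char}(\mathbb{F})$ (with the convention $T = \{1\}$ if $p = 0$ or $p \nmid |G|$). If $T = \{1\}$, then $|G|$ is coprime to $p$, and the Chevalley--Shephard--Todd theorem immediately gives that $S(V)^G$ is a polynomial algebra.

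Assume now $T \ne \{1\}$, so $p > 0$. Since $\dim V = 2$, each transvection in $T$ has a unique fixed line (its axis), and I would split into two subcases. \emph{Case A: all transvections in $T$ share a common axis $L$.} Because $T$ is normal in $G$ and every element of $G$ permutes the set of transvection axes, the line $L$ is $G$-stable. Choosing a basis whose first vector spans $L$, the group $G$ becomes upper triangular with $1$-dimensional diagonal blocks, which are cyclic subgroups of $\mathbb{F}^\times$; the off-diagonal part is an additive subgroup of $\mathbb{F}$ closed under multiplication by the appropriate quotients of these scalars. This is exactly the setup for the Gluing Lemma with $X = L$ and $Y$ a suitable complement, and since the invariants of each $1$-dimensional $G_X, G_Y$ are trivially polynomial on one generator, Proposition \ref{gluing invariants} produces the desired polynomial invariant ring.

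\emph{Case B: $T$ contains two transvections with distinct axes.} A standard commutator and conjugation argument (cf.\ \cite{Litoff}) then shows that $T \supseteq SL(2, \mathbb{F}_q)$ for some finite subfield $\mathbb{F}_q \subseteq \mathbb{F}$; taking $\mathbb{F}_q$ maximal with this property, one verifies that $G \subseteq GL(2, \mathbb{F}_{q'})$ for some $q' = q^e$, so that $SL(2, \mathbb{F}_q) \trianglelefteq G \subseteq GL(2, \mathbb{F}_{q'})$. The two Dickson-type invariants of $SL(2, \mathbb{F}_q)$, of degrees $q^2 - q$ and $q + 1$ with product $|SL(2, \mathbb{F}_q)|$, together with a suitable power of $\det$ adjusted to account for the index $[G : SL(2, \mathbb{F}_q)]$, provide two algebraically independent homogeneous $G$-invariants whose degrees multiply to $|G|$. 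Lemma \ref{criterion} then finishes the proof.

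The main obstacle is Case B: pinning down exactly which subgroup between $SL(2, \mathbb{F}_q)$ and $GL(2, \mathbb{F}_{q'})$ the group $G$ is (modulo scalars and Galois-type twisting) and verifying that the explicit Dickson-plus-determinant invariants can be combined into algebra generators of the correct degrees. Since Theorem \ref{supgluing} uses the present result as its base case (type (ii) with $m = 2$), circular reference must be avoided, so this classification and degree bookkeeping has to be carried out directly, in the spirit of Dickson's original work \cite{Dickson}.
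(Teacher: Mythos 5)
First, a point of orientation: the paper does not prove this statement at all — it is quoted from Nakajima \cite[Theorem 5.1]{Nakajima1} (see also Kemper--Malle \cite[Proposition 7.1]{irreducible}), and it is invoked in the proof of Theorem \ref{supgluing} precisely to dispose of the two-dimensional blocks where assertion (e) of Theorem \ref{structure} fails. So your argument has to stand on its own, and its decisive step does not. In Case B you claim that two transvections in $G$ with distinct axes force $T\supseteq SL(2,\mathbb F_q)$ for some finite subfield $\mathbb F_q\subseteq\mathbb F$. This is false, and the paper's own closing Remark is a counterexample: in $GL(2,\mathbb F_4)$ the group generated by the two transvections $T_{12}(a)$ and $T_{21}(1)$, with $a\in\mathbb F_4\setminus\mathbb F_2$, has order $10$, whereas $|SL(2,\mathbb F_2)|=6$ and $|SL(2,\mathbb F_4)|=60$, so it contains no $SL(2,\mathbb F_q)$ whatsoever (it is dihedral of order $10$; in characteristic $2$ any dihedral group of order twice an odd number is generated by two transvections). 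For such groups there are no Dickson invariants of an $SL(2,\mathbb F_q)$ to restrict, and no power of $\det$ can repair the degree count $\deg(f_1)\deg(f_2)=|G|$ demanded by Lemma \ref{criterion}. You flag Case B as ``the main obstacle,'' but the problem is not bookkeeping between $SL$ and $GL$: the structural reduction itself collapses exactly on these exceptional two-dimensional transvection groups, which are the hard content of Nakajima's theorem and the reason the paper cites it rather than reproving it.

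The other cases are essentially sound but need more care than you give them. If $T=\{1\}$ then indeed, since in dimension $2$ every element of order $p$ is a transvection, $p\nmid|G|$ and Chevalley--Shephard--Todd applies. In Case A, however, ``exactly the setup for the Gluing Lemma'' overstates matters: in a basis adapted to the common axis $L$, the group $G$ need not have the split form \eqref{2form} — consider $\bigl\langle T_{12}(1),\left[\begin{smallmatrix}\mu & b\\ 0 & 1\end{smallmatrix}\right]\bigr\rangle$ with $b\notin\mathbb F_p(\mu)$. One must first split off a semisimple complement (the diagonal quotient has order prime to $p$ and, being abelian and semisimple, is simultaneously diagonalizable by a unipotent change of basis fixing $L$), and then check that the additive group $A$ of off-diagonal entries is stable under multiplication by both diagonal groups $G_X$ and $G_Y$ — this uses the fact that every generating pseudoreflection has a $1$ on its diagonal — so that one may take $\mathbb F_q=\mathbb F_p(G_Y)$, $Y'=\mathbb F_q x_2$, $X'=Ax_1$ in the Gluing Lemma. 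Those repairs are routine; the gap in Case B is not, and as it stands your proof does not establish the theorem.
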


\begin{remark}
In general assertion (e) in Theorem \ref{structure} does not hold
for the case $n_r=2$. Let $\mathbb F=\mathbb F_4$ and $a$ an element
in $\mathbb F_4\setminus\mathbb F_2$. Define $\sigma$ by
$$\begin{array}{cc}
\sigma(1,1)=\mathbb F_2, & \sigma(1,2)=\{a\},\\
\sigma(2,1)=\{1\}, & \sigma(2,2)=\mathbb F_2.
\end{array}$$
Thus $t=1$ here. It is easy to calculate $|GL^{\sigma}(2,\mathbb
F_4)|=10$. This rules out $k_{11}=\mathbb F_2$ in assertion (e)
since $|GL(2,\mathbb F_2)|=6$, and it rules out $k_{11}=\mathbb F_4$
in assertion (e) since $|SL(2,\mathbb F_4)|=60$.
\end{remark}

\end{document}